\theoremstyle{plain}
\newtheorem{theorem}{Theorem}[section]
\newtheorem{lemma}[theorem]{Lemma}
\theoremstyle{definition}
\theoremstyle{remark}
\newtheorem*{remark*}{Remark}
\numberwithin{equation}{section}
\newcommand\Cc{{\mathtt{C}}}
\newcommand\Hh{{\mathtt{H}}}
\newcommand\Mm{{\mathtt{M}}}
\newcommand\Ll{{\mathtt{L}}}
\newcommand\Pp{{\mathtt{P}}}
\newcommand\D{{\mathcal D}}
\newcommand\CC{{\mathbb C}}
\newcommand\RR{{\mathbb R}}
\newcommand\ZZ{{\mathbb Z}}
\newcommand\NN{{\mathbb N}}
\newcommand\PP{{\mathbb P}}
\newcommand\q{\mathfrak{q}}
\newcommand\pp{\mbox{$\mathfrak{p}_{F}$}}
\newcommand\F{{\mathcal F}}
\newcommand\dgr{\operatorname{dgr}}
\newcommand\Sh{\mbox{\Large $\mathfrak {s}$}}
\newcommand{\dosfilas}[2]{
  \ldelim[{2}{2mm}& #1 &\rdelim]{2}{2mm} \\
  & #2 & &  & &
}
\newcommand*\pFqskip{8mu}
\newcommand*\pFq{\begingroup
        \catcode`\,\active
        \def ,{\mskip\pFqskip\relax}%
        \dopFq
}
\def\dopFq#1#2#3#4#5{%
        {}_{#1}F_{#2}\biggl(\genfrac..{0pt}{}{#3}{#4};#5\biggr)%
        \endgroup
}
   \title{Invariant properties for Wronskian type determinants of classical and classical discrete orthogonal polynomials
under an involution of sets of positive integers
  \footnote{Partially supported by MTM2015-65888-C4-1-P (Ministerio de Economía y Competitividad),
FQM-262, FQM-7276 (Junta de Andalucía) and Feder Funds (European
Union).}}
   \author{Guillermo P. Curbera and Antonio J. Dur\'{a}n\\
     \footnotesize
        \  IMUS \& Departamento de An\'{a}lisis Matem\'{a}tico.
       Universidad de Sevilla \\
       \footnotesize Apdo (P. O. BOX) 1160. 41080 Sevilla. Spain.
   curbera@us.es; duran@us.es \\
          \ \ }
   \date{}
\begin{document}
   \maketitle

\bigskip

\begin{abstract}
Given a finite set $F=\{f_1,\cdots ,f_k\}$ of nonnegative integers (written in increasing size) and a classical discrete family $(p_n)_n$ of orthogonal polynomials (Charlier, Meixner, Krawtchouk or Hahn), we consider the Casorati determinant $\det(p_{f_i}(x+j-1))_{i,j=1,\cdots,k}$. In this paper we prove a nice invariant property for this kind of Casorati determinants when the set $F$ is changed by $I(F)=\{0,1,2,\cdots,\max F\}\setminus \{\max F-f:f\in F\}$. This symmetry is related to the existence of higher order difference equations for the orthogonal polynomials with respect to certain Christoffel transforms of the classical discrete measures. By passing to the limit, this invariant property is extended for Wronskian type determinants whose entries are Hermite, Laguerre and Jacobi polynomials.
\end{abstract}

\section{Introduction}
Wronskian and Casoratian determinants whose entries are orthogonal polynomials belonging to the Askey and $\q$-Askey schemes satisfy some very impressive invariance properties (se \cite{du0,du1,duar,GUGM2,OS0,OS1}). They have been found using different approaches.
S. Odake and R. Sasaki in \cite{OS0,OS1} use the equivalence between eigenstate adding and deleting Darboux transformations for solvable (discrete) quantum mechanical systems. This is also the approach used by D. Gómez-Ullate, Y. Grandati and R. Milson in \cite{GUGM2}. On the other hand, the approach used by one of us in \cite{du1} is based in certain purely algebraic transformations of a Wronskian type determinant whose entries are orthogonal polynomials. These Wronskian type determinants are of the form
\begin{equation}\label{mtd}
\det \left( T^{i-1}(p_{m+j-1}(x))\right)_{i,j=1}^n,
\end{equation}
where $m\in \NN$, $(p_n)_n$ is a sequence of orthogonal polynomials with respect to a measure $\mu$ and $T$ is  a linear operator acting in the linear space of polynomials $\PP$ and satisfying that $\dgr(T(p))=\dgr(p)-1$, for all polynomials $p$. The case $T=d/dx$ was studied by Leclerc in \cite{Le2}. Leclerc clarified the approach used by Karlin and Szeg\H o in \cite{KS} to generalize
the classical Tur\'an inequality for Legendre polynomials \cite{Tu} to Hankel determinants whose entries are
some families of classical and classical discrete polynomials. Karlin and Szeg\H o's strategy was to express these Hankel determinants in terms of the Wronskian of certain orthogonal polynomials of another class (see, also, \cite{DD,DPV,FZ,Is,IL}).

The purpose of this paper is to introduce other approach to find these invariance properties. This approach is based in the so-called Krall discrete polynomials. A Krall discrete orthogonal family is a sequence of polynomials $(p_n)_{n\in \NN}$, $p_n$ of degree $n$, orthogonal with respect to a measure which, in addition, are also eigenfunctions of a higher order difference operator. Krall discrete polynomials are one of the most important extensions of the classical discrete families of Charlier, Meixner, Krawtchouk and Hahn. A huge amount of families of Krall discrete orthogonal polynomials have been recently introduced by one of us by mean of certain Christoffel transform of the classical discrete measures  (see \cite{dur0,dur1,dudh,DdI,DdI2}). A Christoffel transformation consists in multiplying a measure $d\mu$ by a polynomial $r$. It has a long tradition in the context of orthogonal polynomials: it goes back a century and a half ago when E.B. Christoffel (see \cite{Chr} and also \cite{Sz}) studied it for the particular case $\mu =x$. Consider now such a Krall discrete measure $\mu$. One can then explicitly find two determinantal representations for the orthogonal polynomials with respect to $\mu$ using two different approaches. On the one hand, one can easily find
a determinantal representation for them by using the general theory of Christoffel transform (see Section \ref{secChr} below). On the other hand, taking into account that the orthogonal polynomials with respect to $\mu$ are eigenfunctions of a higher order difference operator, one can find other determinantal representation for them by applying the method of $\D$-operators developed in \cite{dur1,DdI,DdI2,dudh}.
Most of the symmetries for Casoratian determinants  proved in \cite{du1} are underlying the fact that the orthogonal polynomials with respect to certain Krall discrete measures admit both determinantal representations. Actually, in this paper we prove invariance properties for a bunch of new variants of Casoratian determinants by exploiting this fact. Moreover, by passing to the limit we extend these invariance properties to Wronskian whose entries are classical polynomials. This approach establishes a connection with the technique used in \cite{OS0,OS1,GUGM}. At the discrete level, duality between the variable $x$ and the index $n$ of the polynomial (roughly speaking) is a well-known and fruitful concept. It has been shown in \cite{duch,dume,duhj} that duality interchanges exceptional discrete polynomials with Krall discrete polynomials; one can them construct exceptional polynomials by taking limit. Exceptional  polynomials allow to write exact solutions to rational extensions of classical quantum potentials (continuous and discrete). The last few years have seen a great deal of activity in the area  of exceptional polynomials, mainly by theoretical physicists (see, for instance,
\cite{duch,dume,duhj,GUKM1,GUKM2} (where the adjective \textrm{exceptional} for this topic was introduced), \cite{GUGM,G,OS-1,OS4,Qu}, and the references therein). The approach used by  Odake and Sasaki, and Gómez-Ullate, Grandati and Milson to study invariance properties for Wronskian and Casoratian determinants whose entries are  polynomials belonging to the Askey and $\q$-Askey schemes is based in these exceptional polynomials.

For the benefit of the reader, we display here in detail the invariance property for Casoratian determinants whose entries are Charlier polynomials which we prove in this paper.
To do that we need to introduce the following mapping $I$ defined in the set $\Upsilon_0$  formed by all finite sets of nonnegative integers:
\begin{align*}\noindent
I&:\Upsilon_0 \to \Upsilon_0 \\\label{dinv}
I(F)=\{0,1,2,&\cdots, \max F\}\setminus \{\max F-f,f\in F\}.
\end{align*}
It is not difficult to see that $I$ is an involution when restricted to the set $\Upsilon$  formed by all finite sets of positive integers.

For $a\not =0$, we write $(c_{n}^{a})_n$ for the sequence of Charlier polynomials normalized by taking its leading coefficient equal to $1/n!$ (see Section \ref{ich} below). They are orthogonal with respect to the measure
\begin{equation}\label{chwi}
\mu_a=\sum_{x=0}^\infty \frac{a^x}{x!}\delta_x.
\end{equation}
For a finite set $F=\{f_1,\cdots , f_k\}$ of nonnegative integers (written in increasing size), we  write $\Cc_{F,x}^{a}$ for the Casorati-Charlier determinant
\begin{equation}\label{cch}
\Cc_{F,x}^{a}= \big| c_{f_i}^{a}(x+j-1)\big)\big| _{i,j=1}^k,
\end{equation}
(we use the standard notation $\vert M\vert$ to denote the determinant of the matrix $M$).
It is not difficult to see that $\Cc_{F,x}^a$ is a polynomial in $x$ of degree $w_F=\sum_Ff-\binom{k}{2}$ (see \cite{duch}, Sect. 3).
We then prove the following invariance property for $\Cc_{F,x}^a$.

\begin{theorem}\label{tch}
For $a\not =0$ and a finite set $F$ of nonnegative integers, the invariance
\begin{equation}\label{cip2}
\Cc_{F,x}^{a}=(-1)^{w_F}\Cc_{I(F),-x}^{-a}
\end{equation}
holds.
\end{theorem}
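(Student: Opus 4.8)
The plan is to reduce $\Cc_{F,x}^{a}$ to a Jacobi--Trudi determinant and then read off \eqref{cip2} from the \emph{dual} Jacobi--Trudi formula, the map $F\mapsto I(F)$ being exactly conjugation of the associated partition.

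First I would use that the normalization of $c_n^a$ amounts to the generating function $\sum_{n\ge0}c_n^a(x)t^n=e^{-at}(1+t)^x$; shifting $x\mapsto x+1$ multiplies this by $1+t$, so $c_n^a(x+j-1)=\sum_{l\ge0}\binom{j-1}{l}c^a_{n-l}(x)$, and the matrix $\big(c^a_{f_i}(x+j-1)\big)_{i,j=1}^k$ is obtained from $\big(c^a_{f_i-(j-1)}(x)\big)_{i,j=1}^k$ by column operations given by a triangular matrix with unit diagonal. Hence
\[
\Cc_{F,x}^{a}=\det\big(c^a_{f_i-j+1}(x)\big)_{i,j=1}^k ,
\]
with the conventions $c^a_m\equiv 0$ for $m<0$ and $c^a_0\equiv 1$. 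Reversing the order of the rows and of the columns (which does not change the sign, $(-1)^{\binom k2}$ occurring twice) turns this into $\det\big(c^a_{\mu_i+j-i}(x)\big)_{i,j=1}^k$, where $\mu$ is the partition with $F=\{\mu_i+k-i:\ i=1,\dots,k\}$; note $|\mu|=\sum_{F}f-\binom k2=w_F$.

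Next I would bring in the dual Jacobi--Trudi identity. Writing $H(t)=\sum_{n\ge0}c_n^a(x)t^n$ and letting the elementary counterparts $e_n$ be defined by $\sum_{n\ge0}e_nt^n=E(t)$ with $H(t)E(-t)=1$, one solves $E(t)=e^{-at}(1-t)^{-x}$, so that $e_n=(-1)^n c^{-a}_n(-x)$; this single identity is what produces the parameters $-a$ and $-x$ on the right-hand side of \eqref{cip2}. Since Jacobi--Trudi and its dual hold in the ring of symmetric functions, the ring map $h_n\mapsto c_n^a(x)$ gives
\[
\Cc_{F,x}^{a}=\det\big(c^a_{\mu_i+j-i}(x)\big)_{i,j=1}^{k}=\det\big((-1)^{\mu'_i+j-i}\,c^{-a}_{\mu'_i+j-i}(-x)\big)_{i,j=1}^{\mu_1},
\]
with $\mu'$ the conjugate partition, of length $\mu_1$. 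Pulling $(-1)^{\mu'_i-i}$ out of row $i$ and $(-1)^{j}$ out of column $j$ leaves the overall sign $(-1)^{|\mu'|}=(-1)^{w_F}$; and running the first step backwards (two order reversals followed by the inverse column operations, legitimate because $c^{-a}_n(-x+1)=c^{-a}_n(-x)+c^{-a}_{n-1}(-x)$) identifies what is left with $\Cc_{G,-x}^{-a}$, where $G=\{\mu'_i+\mu_1-i:\ i=1,\dots,\mu_1\}$ is the $\beta$-set of $\mu'$.

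Finally I would check $G=I(F)$. With $N=\max F=\mu_1+k-1$ one has $I(F)=N-\big(\{0,1,\dots,N\}\setminus F\big)$, and the classical description of partition conjugation in terms of $\beta$-sets --- reflect the Maya diagram (abacus) through $N$ and exchange beads with holes --- says precisely that the $\beta$-set $G$ of $\mu'$ inside $\{0,\dots,N\}$ is the reflection of the complement of the $\beta$-set $F$ of $\mu$. Hence $G=I(F)$, which is exactly \eqref{cip2}. I expect the routine parts to be the unipotency of the column transformations and the sign bookkeeping; what needs real care is pinning down the $\beta$-set/conjugation dictionary against the explicit formula for $I$, including the degenerate cases $0\in F$ (where $\mu$ has a vanishing part) and $F=\{0,1,\dots,k-1\}$ (where $w_F=0$). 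Conceptually the argument is just Jacobi's identity for complementary minors of a matrix and of its inverse, applied to the unipotent lower-triangular Toeplitz matrix $\big(c^a_{r-s}(x)\big)_{r,s\ge0}$, whose inverse turns out to be $\big(c^{-a}_{r-s}(-x)\big)_{r,s\ge0}$.
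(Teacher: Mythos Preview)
Your argument is correct and elegant, but it follows a route entirely different from the paper's. Both proofs share the preliminary reduction $\Cc_{F,x}^a=\det\big(c^a_{f_i-j+1}(x)\big)$ (this is the paper's identity~(\ref{alt})), but after that they diverge completely. The paper views $\Cc_{F,n}^{a}$ and $\Cc_{I(F),-n}^{-a}$ as, respectively, the Christoffel--transform data and the $\D$--operator data attached to the Krall--Charlier measure $\mu_a^F=\prod_{f\in F}(x-f)\,\mu_a$: two distinct determinantal formulas for the \emph{same} sequence of orthogonal polynomials force a proportionality $q_n^{a;F}=\gamma_n\tilde q_n^{a;F}$, and comparing leading coefficients and $L^2$--norms (via Lemma~\ref{sze} and a delicate computation of a constant $d$ by induction using the three--term recurrence and the difference equation) yields the ratio identity $\Cc_{F,n+1}^a/\Cc_{F,n}^a=\Cc_{I(F),-n-1}^{-a}/\Cc_{I(F),-n}^{-a}$, from which (\ref{cip2}) follows by checking the base case $n=0$ and invoking analyticity in $a$.

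Your route is purely combinatorial: you recognise $\det\big(c^a_{\mu_i-i+j}(x)\big)$ as the image of the Schur function $s_\mu$ under the specialisation $h_n\mapsto c_n^a(x)$, compute from the generating function $H(t)=e^{-at}(1+t)^x$ that the companion specialisation is $e_n\mapsto(-1)^n c_n^{-a}(-x)$, and then simply read off (\ref{cip2}) from the N\"agelsbach--Kostka (dual Jacobi--Trudi) identity $s_\mu=\det(e_{\mu'_i-i+j})$, since the involution $I$ is exactly conjugation at the level of $\beta$--sets. This is considerably shorter and more conceptual, handles all $a\neq 0$ and all $x$ at once (no analyticity or induction needed), and makes the appearance of $(-a,-x)$ and of $I(F)$ transparent. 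What the paper's approach buys instead is the link to Krall discrete polynomials and $\D$--operators that drives the rest of the article: the Meixner, Hahn and Jacobi invariances are proved there by the same mechanism, and it is not obvious how your symmetric--function argument would extend to those multi--family quasi--Casorati determinants such as~(\ref{defmex}) or~(\ref{cdh0}).
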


Identity (\ref{cip2}) was conjectured by one of us in \cite{duch}. For segments $F=\{n,n+1,\cdots, n+k-1\}$, identity (\ref{cip2}) was conjectured in \cite{du0} and proved in \cite{du1}.

Theorem \ref{tch} will be proved in Section \ref{ich} using the two different determinantal representations mentioned above for the orthogonal polynomials with respect to the  measure
$$
\mu_a^F=\sum_{x=0}^\infty \prod_{f\in F}(x-f) \frac{a^x}{x!}\delta_x.
$$

Passing to the limit, the identity (\ref{cip2}) produces the following invariance for (normalized) Wronskian whose entries are Hermite polynomials (see Section \ref{ihe}). For a finite set $F$ of nonnegative integers, write
\begin{equation}\label{wh}
\Hh_{F,x}=\frac{1}{2^{\binom{k}{2}}\prod_{f\in F}f!}\vert H_{f_i}^{(j-1)}(x)\vert_{i,j=1}^k,
\end{equation}
where as usual $H_n$ denotes de $n$-th Hermite polynomial.

\begin{theorem}\label{the} For a finite set  $F$ of nonnegative integers the invariance
\begin{equation}\label{hip}
\Hh_{F,x}=i^{w_F}\Hh_{I(F),-ix}
\end{equation}
holds.
\end{theorem}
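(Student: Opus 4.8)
The plan is to obtain Theorem \ref{the} as a limiting case of Theorem \ref{tch}, exploiting the well-known fact that Charlier polynomials degenerate to Hermite polynomials under an appropriate rescaling of the variable and the parameter $a$. Concretely, one uses the classical limit
\begin{equation*}
\lim_{a\to\infty} \left(\frac{2}{a}\right)^{n/2} c_n^{a}\!\left(\sqrt{2a}\,x + a\right) = \frac{(-1)^n}{n!\,2^{n/2}}\, H_n(x),
\end{equation*}
which follows from the known asymptotics relating Charlier and Hermite families (it can be derived from the generating functions, or from the hypergeometric representation of $c_n^a$). The first step is therefore to substitute $x \mapsto \sqrt{2a}\,x + a + (j-1)$ into the Casorati-Charlier determinant $\Cc_{F,x}^{a}$ of \eqref{cch} and track how each entry $c_{f_i}^{a}(x+j-1)$ behaves as $a\to\infty$.

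The second step is the bookkeeping of normalizing factors. Inside the determinant, the shift $x+j-1 \mapsto \sqrt{2a}\,x + a + (j-1)$ turns the $j$-th column into evaluations at a point that differs from $\sqrt{2a}\,x+a$ by the bounded quantity $j-1$; since $\sqrt{2a}\to\infty$, a Taylor expansion of $c_{f_i}^a$ around $\sqrt{2a}\,x+a$ shows that, after factoring out $(a/2)^{f_i/2}$ from row $i$ and $(\sqrt{2a})^{-(j-1)}$-type factors appropriately from the columns, the determinant $\bigl|c_{f_i}^a(\sqrt{2a}\,x+a+j-1)\bigr|$ converges (up to an explicit power of $a$ and sign) to $\bigl|\tfrac{1}{f_i!\,2^{f_i/2}}H_{f_i}^{(j-1)}(x)\bigr|$ — the derivative arising because the discrete difference in $j$ scales to a continuous derivative. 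Collecting the row factors $\prod_{f\in F}(a/2)^{f/2}$ and the diagonal scaling $2^{\binom{k}{2}}$ from the column rescaling, and matching against the definition \eqref{wh} of $\Hh_{F,x}$, one identifies
\begin{equation*}
\lim_{a\to\infty}(\text{explicit factor})\cdot \Cc_{F,\,\sqrt{2a}\,x+a}^{a} = \Hh_{F,x},
\end{equation*}
with an analogous statement for the right-hand side of \eqref{cip2}. Here one must be careful: the right side involves $\Cc_{I(F),-x}^{-a}$, so the substitution $x\mapsto \sqrt{2a}\,x+a$ combined with $-x$ and $-a$ produces an evaluation at $-\sqrt{-2a}\,x - a$; choosing the branch $\sqrt{-2a} = i\sqrt{2a}$ converts the Charlier-to-Hermite limit with parameter $-a$ into $\Hh_{I(F),-ix}$, and the factors of $i$ are exactly what assemble into $i^{w_F}$ after one accounts for $w_F = \sum_F f - \binom{k}{2}$ and $w_{I(F)}$.

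The main obstacle — and the step deserving the most care — is the uniformity of the Charlier-to-Hermite limit under the simultaneous column shifts and its compatibility with taking a determinant: one needs the limit to hold not just pointwise for each entry but after the linear (over $\CC[a^{\pm 1/2}]$) column operations that extract the leading behavior, so that no cancellation of the leading terms destroys the limit. This is handled by expanding each $c_{f_i}^a(\sqrt{2a}\,x+a+t)$ as a polynomial in $t$ with coefficients that are themselves (rescaled) derivatives, passing to the limit coefficient-by-coefficient, and invoking multilinearity of the determinant; the Vandermonde-type structure in the column index $j$ guarantees the surviving term is precisely the Wronskian. A secondary technical point is to verify that $w_F$ (the degree of $\Cc_{F,x}^a$, hence the degree in the rescaled variable) matches the power of $\sqrt{2a}$ produced by the rescaling on both sides, which forces the two sides of the limiting identity to carry the same power of $a$ and leaves only the sign/phase $i^{w_F}$; this is a direct consequence of the degree formula $w_F = \sum_F f - \binom{k}{2}$ quoted after \eqref{cch} together with the identity $w_F = w_{I(F)}$, which itself follows by a short counting argument from the definition of $I$.
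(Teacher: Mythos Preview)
Your approach is correct and is essentially the same as the paper's: both derive the Hermite identity by passing to the Charlier--Hermite limit in Theorem~\ref{tch}, the paper doing so tersely via the precomputed limit \eqref{lim1} (obtained through the alternative form \eqref{alt} rather than a Taylor expansion) and a reference to \cite{duch}. One minor slip: the constant on the right of your displayed limit should be $1/n!$ as in \eqref{blchh}, not $(-1)^n/(n!\,2^{n/2})$, though this does not affect the argument.
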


Identity (\ref{hip}) was also conjectured in \cite{duch}.

The case of Meixner, Hahn, Laguerre and Jacobi is even richer than this of Charlier and Hermite. We study them in Sections \ref{ime}, \ref{sha}, \ref{sla} and \ref{sja}, respectively.

\section{Preliminaries}
Let $\mu $ be a Borel measure (positive or not) on the real line. The $n$-th moment of $\mu $ is defined by
$\int _\RR t^nd\mu (t)$. When $\mu$ has finite moments for any $n\in \NN$, we can associate it a bilinear form defined in the linear space of polynomials by
\begin{equation}\label{bf}
\langle p, q\rangle =\int pqd\mu.
\end{equation}
We say that the polynomials $p_n$, $n\in \NN$, $p_n$ of degree $n$, are orthogonal with respect to $\mu$ if they
are orthogonal with respect to the bilinear form defined by $\mu$; that is, if they satisfy
$$
\int p_np_md\mu =c_n\delta_{n,m}, \quad c_n\not =0, \quad n \in \NN.
$$
Orthogonal polynomials with respect to a measure are unique up to multiplication by non null constant.
Positive measures $\mu $ with finite moments of any order and infinitely many points in its support has always a sequence of orthogonal polynomials $(p_n)_{n\in\NN }$, $p_n$ of degree $n$ (it is enough to apply the Gram-Smith orthogonalizing process to $1, x, x^2, \ldots$); in this case
the orthogonal polynomials have positive norm: $\langle p_n,p_n\rangle>0$. Moreover, given a sequence of orthogonal polynomials $(p_n)_{n\in \NN}$ with respect to a measure $\mu$ (positive or not) the bilinear form (\ref{bf}) can be represented by a positive measure if and only if $\langle p_n,p_n \rangle > 0$, $n\ge 0$.

As usual $(a)_n$, $a\in \ZZ, n\in \NN$ denotes the Pochhammer symbol defined by $(a)_n=a(a+1)\cdots(a+n-1)$.

\subsection{Christoffel transform}\label{secChr}
Let $\mu$ be a measure (positive or not) and assume that $\mu$ has a sequence of orthogonal polynomials
$(p_n)_{n\in \NN}$, $p_n$ with degree $n$ and $\langle p_n,p_n\rangle \not =0$ (as we mentioned above, that always happens if $\mu$ is positive, with finite moments and infinitely many points in its support).

Given a finite set $F$ of real numbers, $F=\{f_1,\cdots , f_k\}$, $f_i<f_{i+1}$, we write $\Phi_n$, $n\ge 0$, for the $k\times k$ determinant
\begin{equation}\label{defph}
\Phi_n=\vert p_{n+j-1}(f_i)\vert _{i,j=1,\cdots , k}.
\end{equation}
Notice that $\Phi_n$, $n\ge 0$, depends on both, the finite set $F$ and the measure $\mu$.

The Christoffel transform of $\mu$ associated to the annihilator polynomial $\pp$ of $F$,
$$
\pp (x)=(x-f_1)\cdots (x-f_k),
$$
is the measure defined by $ \mu_F =\pp \mu$.

Orthogonal polynomials with respect to $\mu_F$ can be constructed by means of the formula
\begin{equation}\label{mata00}
q_n(x)=\frac{1}{\pp (x)} \begin{vmatrix}p_n(x)&p_{n+1}(x)&\cdots &p_{n+k}(x)\\
p_n(f_1)&p_{n+1}(f_1)&\cdots &p_{n+k}(f_1)\\
\vdots&\vdots&\ddots &\vdots\\
p_n(f_k)&p_{n+1}(f_k)&\cdots &p_{n+k}(f_k) \end{vmatrix}.
\end{equation}
Notice that the degree of $q_n$ is equal to $n$ if and only if $\Phi_n\not =0$. In that case the leading coefficient $\lambda^Q_n$ of $q_n$ is
equal to $(-1)^k\lambda^P_{n+k}\Phi_n$, where $\lambda ^P_n$ denotes the leading coefficient of $p_n$.

The next Lemma follows easily using \cite{Sz}, Th. 2.5.

\begin{lemma}\label{sze}
The measure $\mu_F$ has a sequence $(q_n)_{n=0}^\infty $, $q_n$ of degree $n$, of orthogonal polynomials if and only if $\Phi_n\not =0$, $n\in \NN$.
In that case, an orthogonal polynomial of degree $n$ with respect to $\mu _F$ is given by (\ref{mata00}) and also $\langle q_n,q_n\rangle _{\mu _F}\not =0$, $n\ge 0$. Moreover
\begin{equation}\label{n2q}
\langle q_n,q_n\rangle _{\mu_F}=(-1)^k\frac{\lambda^P_{n+k}}{\lambda^P_{n}}\Phi_n\Phi_{n+1}\langle p_n,p_n\rangle _{\mu}.
\end{equation}
\end{lemma}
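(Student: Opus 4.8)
The statement to prove is Lemma~\ref{sze}, which characterizes when the Christoffel transform $\mu_F = \pp\mu$ has a full sequence of orthogonal polynomials in terms of the nonvanishing of the determinants $\Phi_n$, and which provides the determinantal formula (\ref{mata00}) together with the norm identity (\ref{n2q}).

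The plan is to reduce everything to the quoted result \cite{Sz}, Th.~2.5, which gives the classical quasi-determinant / Christoffel formula for multiplying a measure by a linear factor $(x-c)$, together with the corresponding norm relation. First I would verify directly that the polynomial $q_n$ defined by the $(k+1)\times(k+1)$ determinant in (\ref{mata00}), once divided by $\pp(x)$, is indeed a polynomial: expanding the determinant along the first row shows it vanishes at each $f_i$ (two equal rows), so $\pp(x)$ divides it, and the quotient has degree at most $n$. Computing the coefficient of $x^{n+k}$ in the determinant by cofactor expansion along the top row gives exactly $\lambda^P_{n+k}\Phi_n$ times a sign, so after dividing by $\pp$ (which is monic of degree $k$) the leading coefficient of $q_n$ is $(-1)^k\lambda^P_{n+k}\Phi_n$; in particular $\dgr q_n = n$ precisely when $\Phi_n \neq 0$. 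Next I would check orthogonality: for any polynomial $r$ of degree $<n$, one has $\langle q_n, r\rangle_{\mu_F} = \int q_n(x) r(x)\pp(x)\,d\mu(x)$, and since $q_n\pp$ is (up to sign) the determinant whose first row is $(p_n,\dots,p_{n+k})$, linearity of the integral in that row lets us replace $p_{n+j-1}$ by $\langle p_{n+j-1}, r\rangle_\mu$; but $r\pp$ has degree $<n+k$... this needs a little care, so the cleanest route is instead the inductive/telescoping one described below.

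The cleanest argument is by induction on $k = \#F$, peeling off one point at a time. For $k=1$ this is exactly \cite{Sz}, Th.~2.5 (Christoffel's theorem for a single linear factor): $\mu_{\{f_1\}} = (x-f_1)\mu$ has orthogonal polynomials iff $p_n(f_1)\neq 0$ for all $n$ (which is the $k=1$ case of $\Phi_n\neq0$), the formula (\ref{mata00}) reduces to the $2\times 2$ determinant, and the norm formula (\ref{n2q}) is the standard one. For the inductive step, write $F = F' \cup \{f_k\}$ with $F' = \{f_1,\dots,f_{k-1}\}$, so that $\mu_F = (x-f_k)\mu_{F'}$. Assuming $\Phi_n^{F'}\neq 0$ for all $n$, the inductive hypothesis gives a full sequence $(q_n')_n$ of orthogonal polynomials for $\mu_{F'}$ with the stated determinantal form and norms. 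Then applying the $k=1$ case to the measure $\mu_{F'}$ and the single point $f_k$: $\mu_F$ has a full orthogonal sequence iff $q_n'(f_k)\neq 0$ for all $n$, and a short computation — expanding the $(k+1)\times(k+1)$ determinant in (\ref{mata00}) for $F$ along its last row, or along its first column, and matching with the $2\times 2$ Christoffel formula applied to the $q_n'$ — identifies $q_n'(f_k)$ (up to the known leading-coefficient factors) with $\Phi_n^F / \Phi_n^{F'}$, so that $q_n'(f_k)\neq 0$ for all $n$ is equivalent to $\Phi_n^F\neq 0$ for all $n$ given $\Phi_n^{F'}\neq0$. Combining the sign and leading-coefficient bookkeeping yields both (\ref{mata00}) and (\ref{n2q}) for $F$; the nonvanishing $\langle q_n,q_n\rangle_{\mu_F}\neq 0$ follows from (\ref{n2q}) and the inductive nonvanishing.

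The main obstacle is the determinantal bookkeeping in the inductive step: one must show that the $(k+1)\times(k+1)$ determinant defining $q_n$ for $F$ equals, after dividing by $(x-f_k)$, the $2\times 2$ Christoffel combination $q_{n+1}'(x)q_n'(f_k) - q_n'(x)q_{n+1}'(f_k)$ (up to an explicit constant built from the $\lambda^P$'s and the $\Phi^{F'}_n$'s), and then identify the constant. This is a Sylvester-type / Jacobi identity on minors of the $(k+1)\times(k+2)$ matrix $(p_{n+j-1}(x_i))$ with rows indexed by $x, f_1,\dots, f_k$; it can be carried out either by a direct cofactor expansion keeping track of signs, or by invoking Sylvester's determinant identity for the two "corner" minors. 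Once that algebraic identity is in place, assembling the signs $(-1)^k$ and the ratio of leading coefficients $\lambda^P_{n+k}/\lambda^P_n$ into (\ref{n2q}) is routine. Everything else — that $\pp$ divides the determinant, that the degree is $n$ exactly when $\Phi_n\neq0$, and the "only if" direction (if some $\Phi_{n_0}=0$ then no degree-$n_0$ orthogonal polynomial can exist, since any candidate must be proportional to the right-hand side of (\ref{mata00}) by uniqueness, forcing degree $<n_0$) — follows directly from \cite{Sz}, Th.~2.5 and the uniqueness of orthogonal polynomials noted in the Preliminaries.
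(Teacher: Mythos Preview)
The paper gives no proof beyond the remark that the lemma follows easily from \cite{Sz}, Th.~2.5. Note that Szeg\H{o}'s Theorem~2.5 already treats multiplication by a polynomial of \emph{arbitrary} degree $k$, not just by a single linear factor, and produces formula~(\ref{mata00}) in one stroke. The intended argument is therefore the direct one you sketched first and then abandoned: for $\dgr r<n$ one has $\langle q_n,r\rangle_{\mu_F}=\langle q_n\pp,r\rangle_\mu$, and linearity of the determinant in its first row replaces that row by $(\langle p_{n+j-1},r\rangle_\mu)_{j=1}^{k+1}$, which is identically zero since $\dgr r<n\le n+j-1$ --- no ``little care'' is needed (you were mistakenly looking at $r\pp$ rather than $r$). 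For the norm, repeat with $r=q_n$: only the entry with $j=1$ survives, equal to $(\lambda^Q_n/\lambda^P_n)\langle p_n,p_n\rangle_\mu$, and its cofactor is $\Phi_{n+1}$, which yields~(\ref{n2q}) immediately.

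Your inductive route, by contrast, has a real gap in the ``if'' direction: to invoke the inductive hypothesis for $\mu_{F'}$ you assume $\Phi_n^{F'}\neq0$ for all $n$, but the lemma's hypothesis gives only $\Phi_n^{F}\neq0$, and this does \emph{not} imply nonvanishing for the subset $F'=F\setminus\{f_k\}$. For instance with $k=2$, take $f_1$ to be a zero of some $p_{n_0}$ (so $\Phi_{n_0}^{\{f_1\}}=0$ and $\mu_{\{f_1\}}$ has no full orthogonal sequence) and $f_2$ generic; then $\Phi_n^{\{f_1,f_2\}}=p_n(f_1)p_{n+1}(f_2)-p_{n+1}(f_1)p_n(f_2)$ is nonzero for every $n$, yet the induction cannot be launched through $F'=\{f_1\}$. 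Your ``only if'' sketch is also not quite right: when $\Phi_{n_0}=0$ the formula yields a $q_{n_0}$ of degree strictly less than $n_0$, which is not a candidate to which uniqueness of the degree-$n_0$ orthogonal polynomial applies; the clean argument is that a column dependence in $\Phi_{n_0}$ produces a nonzero $Q$ of degree $\le n_0-1$ orthogonal (for $\mu_F$) to every polynomial of degree $\le n_0-1$, contradicting the nondegeneracy that a full orthogonal sequence with nonzero norms would guarantee.
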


\subsection{Finite set of positive integers}\label{sfspi}
To each finite set $F$ of nonnegative integers $F=\{ f_1,\cdots , f_k\}$, with $f_i<f_{i+1}$, we associate
the nonnegative integer $w_F$  defined by
\begin{equation}\label{defuf}
w_F=\sum_{f\in F}f-\binom{k}{2}.
\end{equation}

Consider the set $\Upsilon_0$  formed by all finite sets of nonnegative  integers and denote by $\Upsilon$ the subset of $\Upsilon_0$ formed by all finite sets of positive integers.

We consider the mapping $I$ in $\Upsilon_0$ defined by
\begin{align}\label{dinv}
I(F)=\{0,1,2,\cdots, \max F\}\setminus \{\max F-f,f\in F\}.
\end{align}
When restricted to $\Upsilon$, I is an involution: $I^2=Id$.

For the involution $I$, the bigger the holes in $F$ (with respect to the set $\{0,1,2,\cdots , f_k\}$), the bigger the involuted set $I(F)$.
Here it is a couple of examples
$$
I(\{ 1,2,3,\cdots ,k\})=\{ k\},\quad \quad I(\{1, k\})=\{ 1,2,\cdots, k-2, k\}.
$$
The set $I(F)$ will be denoted by $G$: $G=I(F)$. We also write $G=\{g_1,\cdots , g_m\}$ with $g_i<g_{i+1}$ so that $m$ is the number of elements of $G$ and $g_m$ the maximum element of $G$. Notice that the number $w_F$ (\ref{defuf}) is invariant in $\Upsilon$ under $I$. Moreover
\begin{equation}\label{kym}
\mbox{if $F\in \Upsilon$ then } \begin{cases}w_F=w_{I(F)},&\\ f_k=g_m,&\\ m=f_k-k+1.&\end{cases}
\end{equation}
For $F\in \Upsilon$, we also define the number $s_F$ by
\begin{equation}\label{defs0}
s_F=\begin{cases} 1,& \mbox{if $F=\emptyset$},\\
k+1,&\mbox{if $F=\{1,2,\cdots , k\}$},\\
\min \{s\ge 1:s<f_s\}, & \mbox{if $F\not =\{1,2,\cdots k\}$}.
\end{cases}
\end{equation}
For $F\in \Upsilon_0$, we denote by  $F_{\Downarrow}$ the finite set of positive integers defined by
\begin{equation}\label{deff1}
F_{\Downarrow}=\begin{cases} \emptyset,& \mbox{if $F=\{1,2,\cdots , k\}$,}\\
\{f_{s_F}-s_F,\cdots , f_k-s_F\},& \mbox{if $F\not =\{1,2,\cdots , k\}$ and $0\not \in F$}\\
(F \setminus \{0\})_{\Downarrow},& \mbox{if $0\in F$}.
\end{cases}
\end{equation}
One straightforwardly has that
\begin{equation}\label{ref0f1}
\mbox{for $0\in F$ then} \begin{cases}I(F)=I(F_{\Downarrow}),&\\w_F=w_{F_{\Downarrow}}.&\end{cases}
\end{equation}
For an $n$-tuple $\F=(F_1,\cdots, F_n)$ of finite sets of nonnegative numbers, we define
\begin{align}\label{defufn}
w_\F&=\sum_{i=1}^nw_{F_i},\\\label{dinvn}
I(\F)&=(I(F_1),\cdots ,I(F_n)).
\end{align}

\section{Invariance for Casorati-Charlier determinants}\label{ich}
We start with some basic definitions and facts about Charlier  polynomials which we will need later.

For $a\neq0$, we write $(c_n^a)_n$ for the sequence of Charlier polynomials (the next formulas can be found in \cite{Ch}, pp. 170-1; see also \cite{KLS}, pp., 247-9 or \cite{NSU}, ch. 2) defined by
\begin{equation}\label{Chpol}
    c_n^a(x)=\frac{1}{n!}\sum_{j=0}^n(-a)^{n-j}\binom{n}{j}\binom{x}{j}j!.
\end{equation}
The Charlier polynomials are orthogonal with respect to the measure
(\ref{chwi}) (which is positive only when $a>0$) and satisfy
\begin{equation}\label{norCh}
\langle c_n^a,c_n^a\rangle=\frac{a^n}{n!}e^a.
\end{equation}
The three-term recurrence formula for $(c_n^a)_n$ is ($c_{-1}^a=0$)
\begin{equation}\label{Chttrr}
   xc_n^a=(n+1)c_{n+1}^a+(n+a)c_n^a+ac_{n-1}^a,\quad n\geq0.
\end{equation}
They are eigenfunctions of the following second-order difference operator
\begin{equation}\label{Chdeq}
   D_a=-x\Sh_{-1}+(x+a)\Sh_0-a\Sh_1,\quad D_a(c_n^a)=nc_n^a,\quad n\geq0,
\end{equation}
where $\Sh_j(f)=f(x+j)$. They also satisfy
\begin{equation}\label{Chlad}
   \Delta(c_n^a)=c_{n-1}^a,
\end{equation}
and the duality
\begin{equation}\label{Chdua}
   (-a)^mn!c_n^a(m)=(-a)^nm!c_{m}^a(n), \quad n,m\ge 0.
\end{equation}

It is not difficult to prove that for $a\not=0$ the Casoratian determinant $\Cc_{F,x}^a$ (\ref{cch}) is a polynomial in $x$ of degree $w_F$ (see Section 3 of \cite{duch}).

As explained in the Introduction, our starting point to prove Theorem \ref{tch} is the Krall-Charlier measure
\begin{equation}\label{kchm}
\mu_a^F=\sum_{x=0}^\infty \prod_{f\in F}(x-f) \frac{a^x}{x!}\delta_x,
\end{equation}
where $F=\{ f_1,\cdots, f_k\}$ is a finite set of positive integers (written in increasing size). Consider the $k\times k$ determinant
\begin{equation}\label{dph}
\Phi_n=\vert c_{n+j-1}^a(f_i)\vert_{i,j=1}^k.
\end{equation}
Assuming that $\Phi_n^{a;F}\not =0$, $n\in \NN$,
and using Lemma \ref{sze} for the Christoffel transform of a measure, one can generate a sequence of orthogonal polynomials with respect to $\mu_a^F$ by means of the determinant
\begin{equation}\label{defqnch}
q_n^{a;F}(x)=\frac{\begin{vmatrix}c_n^a(x)&c_{n+1}^a(x)&\cdots &c_{n+k}^a(x)\\
c_n^a(f_1)&c_{n+1}^a(f_1)&\cdots &c_{n+k}^a(f_1)\\
\vdots&\vdots&\ddots &\vdots\\
c_n^a(f_k)&c_{n+1}^a(f_k)&\cdots &c_{n+k}^a(f_k) \end{vmatrix}}{\prod_{f\in F}(x-f)} .
\end{equation}
But there still is other determinantal representation for the orthogonal polynomials with respect to $\mu_a^F$. Indeed, for the involuted set $G=I(F)=\{g_1,\cdots ,g_m\}$ (\ref{dinv}) (written also in increasing size), we consider the polynomials
\begin{equation}\label{quschi}
\tilde q_n^{a;F}(x)=\begin{vmatrix}
c^a_n(x-f_k-1) & -c^a_{n-1}(x-f_k-1) & \cdots & (-1)^mc^a_{n-m}(x-f_k-1) \\
c^{-a}_{g_1}(-n-1) & c^{-a}_{g_1}(-n) & \cdots &
c^{-a}_{g_1}(-n+m-1) \\
               \vdots & \vdots & \ddots & \vdots \\
               c^{-a}_{g_m}(-n-1) & \displaystyle
               c^{-a}_{g_m}(-n) & \cdots &c^{-a}_{g_m}(-n+m-1)
             \end{vmatrix}.
\end{equation}
Assuming that
$$
\Cc_{G,-n}^{-a}=\vert c_{g_i}^{-a}(-n+j-1)\vert_{i,j=1}^m\not =0,\quad n\in \NN,
$$
and using the $\D$-operators technique, it was prove in \cite{DdI} (see Theorem 1.1) that the polynomials $(\tilde q_n^{a;F}(x))_n$ are also orthogonal with respect to the Krall-Charlier measure (\ref{kchm}). Both determinantal representations (\ref{defqnch}) and (\ref{quschi})
are the key to prove Theorem \ref{tch}.

\begin{proof}[Proof of Theorem \ref{tch}]

We first prove that it is enough to consider finite sets $F$ of positive integers.

Indeed, combining columns in the Casorati-Charlier determinant $\Cc_{F,x}^a$ (\ref{cch}) and using (\ref{Chlad}), we can rewrite it as follows
\begin{equation}\label{alt}
\Cc_{F,x}^a=\vert c_{f_i-j+1}(x)\vert _{i,ju=1}^k.
\end{equation}
Assume now we have already proved Theorem \ref{tch} for finite sets  of positive integers. Let $F$ be a finite set of nonnegative integers with
$0\in F$. It is easy to prove using (\ref{alt}) that
$\Cc_{F,x}^a=\Cc_{F_\Downarrow,x}^a$, where the set $F_\Downarrow$ is defined by (\ref{deff1}). Since $0\not \in F_\Downarrow$, we have
$\Cc_{F_\Downarrow,x}^a=(-1)^{w_{F_\Downarrow}}\Cc_{I(F_\Downarrow),-x}^{-a}$. Since for $0\in F$, $I(F)=I(F_\Downarrow)$ and
$w_{F_\Downarrow} =w_F$ (see (\ref{ref0f1})), we then have
$\Cc_{F,x}^a=(-1)^{w_{F}}\Cc_{I(F),-x}^{-a}$. That is, Theorem \ref{tch} is also true when $0\in F$.

Hence, we can  assume that $F$ is a finite set of positive integers.

Since both $\Cc_{F,x}^{a}$ and $\Cc_{I(F),-x}^{-a}$ are polynomials in $x$, it is enough to prove Theorem \ref{tch} for $x=n\in \NN$.

Fixed $F$, we prove Theorem \ref{tch} for those $a$ satisfying the assumptions $\Phi_n^{a;F}\not =0$ (see (\ref{dph})) and $\Cc_{I(F),-n}^{-a}\not =0$, $n\in \NN$. By standard analyticity arguments, the result will then follow for all $a\not = 0$.

Since $(q_n^{a;F})_n$ (\ref{defqnch}) and $(\tilde q_n^{a;F})_n$ (\ref{quschi}) are orthogonal polynomials with respect to the same measure $\mu_a^F$ (\ref{kchm}) and
orthogonal polynomials with respect to a measure are unique up to multiplicative constants, we have that
\begin{equation}\label{ipo}
q_n^{a;F}(x)=\gamma_n\tilde q_n^{a;F}(x).
\end{equation}
On the one hand, comparing leading coefficients in (\ref{ipo}), we get (using Lemma \ref{sze}, the normalization for the Charlier polynomials and the definition of $\tilde q_n^{a;F}$)
$$
\frac{(-1)^k}{(n+k)!}\Phi_n^{a;F}=\frac{\gamma_n}{n!}\Cc_{I(F),-n}^{-a}.
$$
Using the duality (\ref{Chdua}), we have (see also \cite{duch}, identity (3.14))
\begin{equation}\label{duomph}
\Cc_{F,n}^a=\frac{\prod_{i=0}^{k-1}(n+i)!}{(-a)^{kn-w_F}\prod_{f\in F}f!}\Phi_n^{a;F},
\end{equation}
where $w_F$ is defined by (\ref{defuf}).
This gives for $\gamma_n$ the expression
\begin{equation}\label{ga1}
\gamma_n=\frac{(-1)^k(-a)^{kn-w_F}\prod_{f\in F}f!}{\prod_{i=1}^{k}(n+i)!}\frac{\Cc_{F,n}^a}{\Cc_{I(F),-n}^{-a}}.
\end{equation}
On the other hand, the identity (\ref{ipo}) gives
\begin{equation}\label{no1}
\Vert q_n^{a;F}\Vert _2^2=\gamma_n^2 \Vert \tilde q_n^{a;F}\Vert _2^2.
\end{equation}
The $L^2$-norm $\Vert q_n^{a;F}\Vert _2$ can be computed using Lemma \ref{sze} and (\ref{norCh})
$$
\Vert q_n^{a;F}\Vert _2^2=\frac{(-1)^ka^ne^a}{(n+k)!}\Phi_n^{a;F}\Phi_{n+1}^{a;F}.
$$
Inserting it in (\ref{no1}) and using the duality (\ref{duomph}) and (\ref{ga1}), we get after straightforward computations
\begin{equation}\label{no2}
\Vert \tilde q_n^{a;F}\Vert _2^2=\frac{a^{n+k}e^a}{n!}(\Cc_{I(F),-n}^{-a})^2\frac{\Cc_{F,n+1}^a}{\Cc_{F,n}^a}.
\end{equation}
We now compute the $L^2$-norm $\Vert \tilde q_n^{a;F}\Vert _2$ using a different approach.
Indeed, we have
$$
\Vert \tilde q_n^{a;F}\Vert _2^2=\frac{\Cc_{I(F),-n}^{-a}}{n!}\langle x^n,\tilde q_n^{a;F}(x)\rangle_{\mu_a^F}.
$$
According to Lemma 4.2 of \cite{DdI} (see the last formula in the proof of that Lemma in \cite{DdI}, p. 66), this gives
\begin{equation}\label{no3}
\Vert \tilde q_n^{a;F}\Vert _2^2=\frac{(-1)^md}{n!}\Cc_{I(F),-n}^{-a}\Cc_{I(F),-n-1}^{-a},
\end{equation}
where the number $d=d(n,a,F)$ is given by
\begin{align}\label{pcl}
d=(-1)^m&\Big( \langle (x-\max F-1)^n,c_{n-m}^a(x-\max F-1)\rangle_{\mu_a^F} \\\nonumber &\left.\hspace{.5cm}-(-1)^{n-1}e^a a^{g_m}\sum_{i=1}^m\frac{(-g_i-1)^nc_{g_i}^{-a}(-n+m-1)}{p'(g_i)c_{g_i}^{-a}(0)}\right),
\end{align}
$p(x)=\prod_{i=1}^m(x-g_i-1)$ and, as before, we write $I(F)=G=\{g_1,\cdots, g_m\}$.

We now claim that
\begin{equation}\label{cl1}
d=(-1)^ma^{n+k}e^a.
\end{equation}

Inserting it in (\ref{no3}) and using then (\ref{no2}), we get
\begin{equation}\label{inx}
\frac{\Cc_{F,n+1}^a}{\Cc_{F,n}^a}=\frac{\Cc_{I(F),-n-1}^{-a}}{\Cc_{I(F),-n}^{-a}}.
\end{equation}
Using (\ref{alt}) and taking into account that $c_n^a(0)=(-a)^n/n!$, a direct computation gives
$$
\Cc_{F,0}^a=\frac{(-a)^{w_F}V_F}{\prod _Ff!},
$$
where $V_F$ is the Vandermonde determinant defined by
\begin{equation}\label{defvdm}
V_F=\prod_{1=i<j=k}(f_j-f_i).
\end{equation}
A careful computation using the definition of $I(F)$ (\ref{dinv}) shows that
$V_F/\prod _Ff!=V_{I(F)}/\prod _{I(F)}f!$. Hence, since $w_F=w_{I(F)}$ (\ref{kym}), we get $\Cc_{F,0}^a=(-1)^{w_F}\Cc_{I(F),0}^{-a}$. The invariance (\ref{cip2}) can now be proved easily from (\ref{inx}) by induction on $n$.

We finally prove the Claim (\ref{cl1}).

To do that, we use the identity (4.13) of \cite{DdI}:
\begin{align}\label{4.13}
\langle(x-&\max F-1)^j,c_{n-l}^a(x-\max F -1)\rangle_{\mu_a^F}\\\nonumber &=(-1)^{n+l+m-1}e^a a^{g_m}\sum_{i=1}^m\frac{(-g_i-1)^jc_{g_i}^{-a}(-n+l-1)}{p'(g_i)c_{g_i}^{-a}(0)},\\
\end{align}
where $l=0,\cdots , m$ and $0\le j\le n-1$ (notice that the left hand side of (\ref{4.13}) is $0$ when $n-l<0$; this case is labeled (4.15) in \cite{DdI}).

We then proof (\ref{cl1}) by induction on $n$. For $n=0$, we have using (\ref{pcl}), the duality (\ref{Chdua}) and
taking into account that $c_n^a(0)=(-a)^n/n!$
\begin{align*}
d&=(-1)^m e^a a^{g_m}\sum_{i=1}^m\frac{c_{g_i}^{-a}(m-1)}{p'(g_i)c_{g_i}^{-a}(0)}\\
&=(-1)^m e^a a^{g_m}\sum_{i=1}^m\frac{a^{g_i-m+1}(m-1)!g_i!c_{m-1}^{-a}(g_i)}{p'(g_i)g_i!a^g_i}\\
&=(-1)^m e^a a^{g_m-m+1}(m-1)!\sum_{i=1}^m\frac{c_{m-1}^{-a}(g_i)}{p'(g_i)}.
\end{align*}
On the one hand $g_m-m+1=k$ (see (\ref{kym})) and, on the other hand, since $c_{m-1}^{-a}(x)$ is a polynomial of degree $m-1$ and the polynomial $p$ has degree $m$, we have from Lemma 2.1 of \cite{DdI}
$$
d=(-1)^m e^a a^{k}(m-1)!\frac{1}{(m-1)!}=(-1)^m e^a a^{k}.
$$
This is just (\ref{cl1}) for $n=0$.

We now prove (\ref{cl1}) for $n+1$. Using the three term recurrence formula (\ref{Chttrr}), we can write
\begin{align*}
\langle (x-f_k-1)^{n+1}&,c_{n+1-m}^a(x-f_k-1)\rangle_{\mu_a^F}\\=&
\langle (n-m+2)(x-f_k-1)^{n},c_{n+2-m}^a(x-f_k-1)\rangle_{\mu_a^F}\\&+\langle (n+1-m+a)(x-f_k-1)^{n},c_{n+1-m}^a(x-f_k-1)\rangle_{\mu_a^F}
\\&+\langle a(x-f_k-1)^{n},c_{n-m}^a(x-f_k-1)\rangle_{\mu_a^F},
\end{align*}
where we have set $\max F=f_k$.
Using (\ref{4.13}) for $n+2$, $j=n$, $l=m$ and $n+1$, $j=n$, $l=m$, respectively, and the induction hypothesis, we have after straightforward computations
\begin{align*}
\langle(x-f_k-1&)^{n+1},c_{n+1-m}^a(x-f_k-1)\rangle_{\mu_a^F}=a^{n+k+1}e^a\\
&+(-1)^{n-1}e^aa^{g_m}\sum_{i=1}^m\frac{(-g_i-1)^n}{p'(g_i)c_{g_i}^{-a}(0)}
\left[(n-m+2)c_{g_i}^{-a}(-n+m-3)\right. \\&\left.-(n+1-m+a)c_{g_i}^{-a}(-n+m-2)+ac_{g_i}^{-a}(-n+m-1)\right].
\end{align*}
Using the second order difference equation (\ref{Chdeq}) (changing $a$ to $-a$, $n$ to $g_i$ and $x$ to $-n+m-2$), we finally have
\begin{align*}
\langle(x-f_k-1&)^{n+1},c_{n+1-m}^a(x-f_k-1)\rangle_{\mu_a^F}\\&=a^{n+k+1}e^a
+(-1)^{n}e^aa^{g_m}\sum_{i=1}^m\frac{(-g_i-1)^{n+1}c_{g_i}^{-a}(-n+m-2)}{p'(g_i)c_{g_i}^{-a}(0)}.
\end{align*}
This is just (\ref{cl1}) for $n+1$.

\end{proof}

\section{Invariance for Wroskian whose entries are Hermite polynomials}\label{ihe}
We write $(H_n)_n$ for the sequence of Hermite polynomials defined by (see \cite{Ch}, Ch. V; see also \cite{KLS}, pp, 250-3)
\begin{equation}\label{Hpol}
 H_n(x)=n!\sum_{j=0}^{[n/2]}\frac{(-1)^j(2x)^{n-2j}}{j!(n-2j)!}.
\end{equation}
The Hermite polynomials are orthogonal with respect to the weight function $e^{-x^2}$, $x\in \RR$.
They satisfy $H_n'(x)=2nH_{n-1}(x)$.

One can obtain Hermite polynomials from Charlier polynomials using the limit
\begin{equation}\label{blchh}
\lim_{a\to \infty}\left(\frac{2}{a}\right)^{n/2}c_n^a(\sqrt {2a}x+a)=\frac{1}{n!}H_n(x)
\end{equation}
see \cite{KLS}, p. 249 (take into account that we are using a different normalization for Charlier polynomials to that in \cite{KLS}). The previous limit is uniform in compact sets of $\CC$.

Using the limit (\ref{blchh}) and the identity (\ref{alt}), one can get the Wronskian $\Hh_{F,x}$ (\ref{wh}) from the Casoratian determinant (\ref{cch}) (see \cite{duch}, Section 5). More precisely
\begin{equation}\label{lim1}
\lim_{a\to \infty}\left(\frac{2}{a}\right)^{(u_F+k)/2}\Cc_{F,\sqrt {2a}x+a}^a=\Hh_{F,x}.
\end{equation}
Theorem \ref{the} is then an easy consequence of Theorem \ref{tch} and (\ref{lim1}).

\section{Invariance for quasi Casorati-Meixner determinants}\label{ime}
For $a\not =0, 1$ we write $(m_{n}^{a,c})_n$ for the sequence of Meixner polynomials defined by
\begin{equation}\label{Mxpol}
m_{n}^{a,c}(x)=\frac{a^n}{(1-a)^n}\sum _{j=0}^n a^{-j}\binom{x}{j}\binom{-x-c}{n-j}
\end{equation}
(we have taken a slightly different normalization from the one used in \cite{Ch}, pp. 175-7; see also \cite{KLS}, pp, 234-7 or \cite{NSU}, ch. 2).

For $a\not =0,1$ and $c\not =0,-1,-2,\ldots $, Meixner polynomials are always orthogonal with respect to measure $\rho_{a,c}$. For $0<\vert a\vert<1$ and $c\not =0,-1,-2,\ldots $, we have
\begin{equation*}\label{MXw}
\mu_{a,c}=\sum _{x=0}^\infty \frac{a^{x}\Gamma(x+c)}{x!}\delta _x.
\end{equation*}
Meixner polynomials satisfy the duality
\begin{equation}\label{duame}
a^{m-n}n!(1+c)_{m-1}m_{n}^{a,c}(m)=(a-1)^{m-n}m!(1+c)_{n-1}m_{m}^{a,c}(n).
\end{equation}

Krall-Meixner measures seem to have a richer structure than the Krall-Charlier measures considered in Section \ref{ich}. Indeed,
given a pair $\F=(F_1,F_2)$ of finite sets of nonnegative integers, $F_i$ with $k_i$ elements, $i=1,2$, respectively, one can construct
Krall-Meixner measures by multiplying the Meixner weight by the polynomial $\prod_{f\in F_1}(x-f)\prod_{f\in F_2}(x+c+f)$ (see \cite{dur0,dur1,DdI}). This produces the measure
$$
\mu _{a,c}^{\F}=\prod_{f\in F_1}(x-f)\prod_{f\in F_2}(x+c+f)\mu _{a,c}.
$$
One then can generate orthogonal polynomials with respect to the measure $\mu _{a,c}^{\F}$ using Lemma \ref{sze} ($k=k_1+k_2$):
\begin{equation}\label{defqnme}
q_n^{a,c;\F}(x)=\frac{\left|
  \begin{array}{@{}c@{}lccc@{}c@{}}
  & m_{n+j-1}^{a,c}(x) &&\hspace{-.9cm}{}_{1\le j\le k+1} \\
    \dosfilas{ m_{n+j-1}^{a,c}(f) }{f\in F_1} \\
    \dosfilas{(-1)^{j-1}m_{n+j-1}^{1/a,c}(f) }{f\in F_2}
  \end{array}
  \right|}{(-1)^{nk_2}\prod_{f\in F_1}(x-f)\prod_{f\in F_2}(x+c+f)},
\end{equation}
where we have used that $m_n^{a,c}(x)=(-1)^nm_n^{1/a,c}(-x-c)$.

Along the rest of this paper, we use the following notation:
given a finite set of positive integers $F=\{f_1,\ldots , f_k\}$, the expression
\begin{equation}\label{defdosf}
  \begin{array}{@{}c@{}lccc@{}c@{}}
  &  &&\hspace{-.9cm}{}_{1\le j\le k} \\
    \dosfilas{ z_{f,j}  }{f\in F}
  \end{array}
\end{equation}
inside of a matrix or a determinant will mean the submatrix defined by
$$
\left(
\begin{array}{cccc}
z_{f_1,1} & z_{f_1,2} &\cdots  & z_{f_1,k}\\
\vdots &\vdots &\ddots &\vdots \\
z_{f_k,1} & z_{f_k,2} &\cdots  & z_{f_k,k}
\end{array}
\right) .
$$
The determinant (\ref{defmex}) should be understood in this form.

In view of (\ref{defqnme}), and using the duality (\ref{duame}), we define the (normalized) quasi Casoratian-Meixner determinant
\begin{equation}\label{defmex}
\Mm_{\F,x}^{a,c}=\frac{\left|
  \begin{array}{@{}c@{}lccc@{}c@{}}
     &  &&\hspace{-.9cm}{}_{1\le j\le k} \\
     \dosfilas{ m_{f}^{a,c}(x+j-1) }{f\in F_1} \\
    \dosfilas{ m_{f}^{1/a,c}(x+j-1)/a^{j-1} }{f\in F_2}
  \end{array}
  \right|}{a^{\binom{k_2}{2}-k_2(k-1)}(1-a)^{k_1k_2}}.
\end{equation}
It can be proved that for $a\not=0,1$ this Casoratian determinant $\Mm_{\F,x}^a$  is a polynomial in $x$ of degree $w_\F$ (\ref{defufn}) (see Section 3 of \cite{dume}).

Using that $\mu_{a,c}^\F$ is a Krall discrete measure (its orthogonal polynomials are also eigenfunctions of a higher order difference operator), in  Theorem 1.1 of \cite{DdI} it is proved that the polynomials
\begin{equation}\label{qusmei}
\tilde q_n^{a,c;\F}(x)=\left|
  \begin{array}{@{}c@{}lccc@{}c@{}}
    &m_{n-j+1}^{a,\tilde c}(x-\max F_1-1)&& \hspace{-.9cm}{}_{1\le j\le m+1} \\
    \dosfilas{ m_{g}^{a,2-\tilde c}(-n+j-2) }{g\in I(F_1)} \\
    \dosfilas{ m_{g}^{1/a,2-\tilde c}(-n+j-2)/a^{j-1} }{g\in I(F_2)}
  \end{array}
  \right|,
\end{equation}
where $\tilde c=c+\max F_1+\max F_2+2$ and $m$ is the sum of the number of elements of $I(F_1)$ and $I(F_2)$, are also orthogonal with respect to $\mu_{a,c}^\F$.

We then have $q_n^{a,c;\F}(x)=\gamma_n\tilde q_n^{a,c;\F}(x)$ for certain normalization sequence $\gamma_n\not =0$, $n\in \NN$.
One can then compute explicitly the sequence $\gamma_n$ by comparing the leading coefficient and the $L^2$-norm of the polynomials $q_n^{a,c;\F}(x)$ and $\tilde q_n^{a,c;\F}(x)$ (as in the proof of Theorem \ref{tch}). In doing that, we can prove the following theorem.

\begin{theorem}\label{tme}
For $a\not =0, 1$ and a pair $\F=(F_1,F_2)$ of finite sets of nonnegative integers the invarinace
\begin{equation}\label{iza}
\Mm_{\F,x}^{a,c}=(-1)^{w_\F}\Mm_{I(\F),-x}^{a,-c-\max F_1-\max F_2},
\end{equation}
holds (see (\ref{dinvn}) for the definition of $I(\F)$).
\end{theorem}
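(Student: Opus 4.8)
The plan is to follow the proof of Theorem \ref{tch} essentially verbatim, with the Charlier duality (\ref{Chdua}) replaced by the Meixner duality (\ref{duame}) and the single defining set $F$ replaced by the pair $\F=(F_1,F_2)$. First I would reduce to the case in which $F_1$ and $F_2$ are sets of \emph{positive} integers: combining columns in (\ref{defmex}) and using the lowering relation for Meixner polynomials (the analogue of (\ref{Chlad}) and of the column identity (\ref{alt})), one shows that $\Mm_{\F,x}^{a,c}$ is unchanged when $0$ is deleted from $F_1$ or from $F_2$ and that set is shifted down, exactly as $\Cc_{F,x}^a=\Cc_{F_\Downarrow,x}^a$; since $I(\F)$, $w_\F$ and the numbers $\max F_1,\max F_2$ are unaffected by this operation (the counterpart of (\ref{ref0f1})), it suffices to treat $F_1,F_2\in\Upsilon$. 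Next, since both sides of (\ref{iza}) are polynomials in $x$ of degree $w_\F$, it is enough to prove the identity at $x=n\in\NN$; and, after fixing $\F$ and $c$, it is enough to do so for those $a$ for which the relevant subdeterminants of (\ref{defqnme}) and (\ref{defmex}) (the Meixner counterparts of $\Phi_n^{a;F}$ and $\Cc_{I(F),-n}^{-a}$) do not vanish for any $n$, the general case following by analyticity in $a$ and $c$.

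Under these genericity assumptions, Lemma \ref{sze} shows that the $q_n^{a,c;\F}$ of (\ref{defqnme}) form a sequence of orthogonal polynomials with respect to $\mu_{a,c}^\F$, while Theorem 1.1 of \cite{DdI} shows the same for the $\tilde q_n^{a,c;\F}$ of (\ref{qusmei}), with $\tilde c=c+\max F_1+\max F_2+2$; hence $q_n^{a,c;\F}=\gamma_n\tilde q_n^{a,c;\F}$ for a nonzero constant $\gamma_n$. I would then compare leading coefficients in this identity, using Lemma \ref{sze} for the leading coefficient of $q_n^{a,c;\F}$, the top term of $\tilde q_n^{a,c;\F}$, and the Meixner analogue of (\ref{duomph}) -- obtained from the duality (\ref{duame}), which turns the $\Phi_n$-type determinant in the numerator of (\ref{defqnme}) into $\Mm_{\F,n}^{a,c}$ up to an explicit power of $a$, of $1-a$, and a ratio of $\Gamma$-factors. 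This yields a closed formula for $\gamma_n$ in which the only non-explicit factor is the ratio $\Mm_{\F,n}^{a,c}/\Mm_{I(\F),-n}^{a,-c-\max F_1-\max F_2}$.

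Next I would compute $\Vert\tilde q_n^{a,c;\F}\Vert_2^2$ in two ways. On one hand, $\Vert q_n^{a,c;\F}\Vert_2^2$ is given by formula (\ref{n2q}) of Lemma \ref{sze} together with the Meixner norm, and dividing by $\gamma_n^2$ expresses $\Vert\tilde q_n^{a,c;\F}\Vert_2^2$ in terms of $(\Mm_{I(\F),-n}^{a,-c-\max F_1-\max F_2})^2$ and the ratio $\Mm_{\F,n+1}^{a,c}/\Mm_{\F,n}^{a,c}$. On the other hand, the Meixner versions of Lemma 4.2 and identity (4.13) of \cite{DdI} give $\Vert\tilde q_n^{a,c;\F}\Vert_2^2=\frac{(-1)^m d}{n!}\,\Mm_{I(\F),-n}^{a,-c-\max F_1-\max F_2}\Mm_{I(\F),-n-1}^{a,-c-\max F_1-\max F_2}$ for an auxiliary constant $d=d(n,a,c,\F)$, and one must prove a Claim identifying $d$ with an explicit monomial in $a$ and $1-a$ times $\Gamma$-factors (the exact Meixner counterpart of the Claim (\ref{cl1})). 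Granting the Claim, equating the two expressions collapses to the single recurrence $\Mm_{\F,n+1}^{a,c}/\Mm_{\F,n}^{a,c}=\Mm_{I(\F),-n-1}^{a,-c-\max F_1-\max F_2}/\Mm_{I(\F),-n}^{a,-c-\max F_1-\max F_2}$. The proof is then finished by checking the base case $x=0$: a direct computation with $m_n^{a,c}(0)$ and $m_n^{1/a,c}(0)$ evaluates $\Mm_{\F,0}^{a,c}$ explicitly, and a Vandermonde-type argument parallel to the one for $\Cc_{F,0}^a$ (using $w_\F=w_{I(\F)}$) gives $\Mm_{\F,0}^{a,c}=(-1)^{w_\F}\Mm_{I(\F),0}^{a,-c-\max F_1-\max F_2}$; an induction on $n$ then yields (\ref{iza}).

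The main obstacle is the Claim on $d$. Its proof goes by induction on $n$, using the Meixner three-term recurrence together with the second-order difference operator $D_{a,c}$ (acting as $D_{a,c}m_n^{a,c}=n\,m_n^{a,c}$), exactly as (\ref{cl1}) was proved from (\ref{Chttrr}) and (\ref{Chdeq}); but the computation is heavier than in the Charlier case because the block $F_2$ forces the companion parameter $a\mapsto 1/a$ and the extra parameter $c$ (resp.\ $\tilde c$) has to be tracked both through the difference operator and through the shift $2-\tilde c=-c-\max F_1-\max F_2$. A secondary, purely bookkeeping difficulty is to keep the powers of $a$ and $1-a$, the sign $(-1)^{nk_2}$ in (\ref{defqnme}), and the normalizing factor $a^{\binom{k_2}{2}-k_2(k-1)}(1-a)^{k_1k_2}$ in (\ref{defmex}) consistent throughout, so that all explicit prefactors cancel and only the ratios of $\Mm$'s survive in the recurrence.
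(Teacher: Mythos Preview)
Your proposal is correct and follows essentially the same approach as the paper: the paper only sketches the proof of Theorem \ref{tme} by stating that one compares the leading coefficients and the $L^2$-norms of the two orthogonal families $q_n^{a,c;\F}$ (\ref{defqnme}) and $\tilde q_n^{a,c;\F}$ (\ref{qusmei}) ``as in the proof of Theorem \ref{tch}'', which is precisely the strategy you lay out (reduction to positive integers, duality via (\ref{duame}), the recurrence for the ratio, and the base case at $x=0$). The additional bookkeeping difficulties you anticipate (tracking the $F_2$-block, the parameter $1/a$, and the shift $2-\tilde c=-c-\max F_1-\max F_2$) are real but are exactly the extra work the paper leaves implicit.
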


This theorem was conjecture in \cite{dume} (Conjecture 2 in the Introduction).

\section{Invariance for quasi Wroskian whose entries are Laguerre polynomials}\label{sla}
We write $(L_n^\alpha)_n$ for the sequence of Laguerre polynomials  (see \cite{Ch}, Ch. V; see also \cite{KLS}, pp, 241-244).

One can obtain Laguerre polynomials from Meixner polynomials using the limit
\begin{equation}\label{blmel}
\lim_{a\to 1}(a-1)^nm_n^{a,c}\left(\frac{x}{1-a}\right)=L_n^{c-1}(x)
\end{equation}
see \cite{KLS}, p. 243 (take into account that we are using for the
Meixner polynomials a different normalization to that in \cite{KLS})

Using the limit (\ref{blmel}), one can get from the quasi Casoratian (\ref{defmex})  the following quasi Wronskian whose entries are Laguerre polynomials
(see \cite{dume}, Section 5).
\begin{equation}\label{defhoml}
\Ll_{\F,x}^{\alpha}=(-1)^{\sum_{F_1}f}\left|
  \begin{array}{@{}c@{}lccc@{}c@{}}
   &  &&\hspace{-.9cm}{}_{1\le j\le k} \\
    \dosfilas{ (L_{f}^{\alpha})^{(j-1)}(x)}{f\in F_1} \\
    \dosfilas{ L_{f}^{\alpha +j-1}(-x) }{f\in F_2}
  \end{array}
  \right| ,
\end{equation}
where as before $\F=(F_1,F_2)$ is a pair of finite sets of nonnegative integers, $F_i$ with $k_i$ elements, $i=1,2$, respectively, and $k=k_1+k_2$.

More precisely
\begin{equation}\label{lim12}
\lim _{a\to 1^-}(1-a)^{w_\F}\Mm_{\F,x/(1-a)}^{a,c}=\Ll_{\F,x}^\alpha .
\end{equation}

Theorem \ref{tme} then gives.

\begin{theorem}\label{tla}
For a pair $\F=(F_1,F_2)$ of finite sets of nonnegative integers the invariance
\begin{equation}\label{ila}
L_{\F,x}^{\alpha}=(-1)^{w_\F}L_{I(\F),-x}^{-\alpha-\max F_1-\max F_2-2},
\end{equation}
holds.
\end{theorem}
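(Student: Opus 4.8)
The plan is to obtain Theorem \ref{tla} as a direct limiting consequence of Theorem \ref{tme}, exactly paralleling the way Theorem \ref{the} is deduced from Theorem \ref{tch} in Section \ref{ihe}. The two ingredients already available are the invariance (\ref{iza}) for the quasi Casorati--Meixner determinant $\Mm_{\F,x}^{a,c}$, and the limiting relation (\ref{lim12}), which says that $(1-a)^{w_\F}\Mm_{\F,x/(1-a)}^{a,c}$ converges to $\Ll_{\F,x}^\alpha$ as $a\to 1^-$ (with $\alpha=c-1$, coming from the Meixner-to-Laguerre limit (\ref{blmel})). The strategy is therefore: take (\ref{iza}), substitute $x\mapsto x/(1-a)$, multiply both sides by $(1-a)^{w_\F}$, and let $a\to 1^-$.

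First I would write out the left-hand side after the substitution and scaling: $(1-a)^{w_\F}\Mm_{\F,x/(1-a)}^{a,c}$, which by (\ref{lim12}) tends to $\Ll_{\F,x}^\alpha$. Next I would handle the right-hand side. By (\ref{iza}) it equals $(-1)^{w_\F}(1-a)^{w_\F}\Mm_{I(\F),-x/(1-a)}^{a,\,-c-\max F_1-\max F_2}$. The key point is that this is again (up to the sign $(-1)^{w_\F}$ and the replacement of $c$ by $c'=-c-\max F_1-\max F_2$) precisely the quantity to which (\ref{lim12}) applies with $\F$ replaced by $I(\F)$ and argument $-x$ in place of $x$; note that $w_{I(\F)}=w_\F$ by the invariance of $w$ under $I$ (each $w_{F_i}$ is invariant, hence so is the sum (\ref{defufn})). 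Applying (\ref{lim12}) to $I(\F)$ gives the limit $(-1)^{w_\F}\Ll_{I(\F),-x}^{\alpha'}$, where $\alpha'=c'-1=-c-\max F_1-\max F_2-1=-\alpha-\max F_1-\max F_2-2$, since $\alpha=c-1$. Equating the two limits yields exactly (\ref{ila}).

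To make this rigorous I would note, as in Section \ref{ihe}, that both sides of (\ref{ila}) are polynomials in $x$ (the Meixner determinants are polynomials in $x$ of degree $w_\F$, and this property is preserved under the limit), so it suffices to check the identity of the limiting polynomials coefficientwise; the convergence in (\ref{lim12}) is uniform on compact sets of $\CC$, which legitimizes passing to the limit inside the determinant entrywise and hence for the whole polynomial. One should also record that $\max F_i$ is unchanged under the operations involved — the scaling and the sign do not affect the index sets — so the parameter shift on the right of (\ref{iza}) transfers cleanly through the limit.

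The main obstacle, as usual in these limit arguments, is purely bookkeeping: tracking the precise power of $(1-a)$, the exponent shift $c\leftrightarrow -c-\max F_1-\max F_2$ through the correspondence $\alpha=c-1$, and confirming that (\ref{lim12}) is being applied to $I(\F)$ with the second parameter $c'=-c-\max F_1-\max F_2$ rather than $c$ — so that the resulting Laguerre parameter is $c'-1$ and not $c-1$. Once the substitution $x\mapsto x/(1-a)$ is made consistently on both sides and the identities $w_{I(\F)}=w_\F$ and $\alpha'=-\alpha-\max F_1-\max F_2-2$ are verified, the theorem follows immediately; there is no genuinely new analytic or algebraic difficulty beyond what was already dealt with in the Charlier-to-Hermite case.
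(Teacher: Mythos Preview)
Your proposal is correct and is exactly the argument the paper uses: the paper simply states that ``Theorem \ref{tme} then gives'' Theorem \ref{tla}, relying on the limit relation (\ref{lim12}) together with the parameter correspondence $\alpha=c-1$ and the invariance $w_{I(\F)}=w_\F$. Your write-up merely makes the bookkeeping explicit.
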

This theorem was conjecture in \cite{dume} (see identity (1.12) in the Introduction).

\section{Invariance for quasi Casorati-Hahn determinants}\label{sha}
We write $(h_{n}^{\alpha,\beta,N})_n$ for the sequence of Hahn polynomials defined by
\begin{equation}\label{hpol}
h_{n}^{\alpha,\beta,N}(x)=\frac{(-N)_n(\alpha+1)_n}{n!}\pFq{3}{2}{-n,-x,n+\alpha+\beta+1}{\alpha+1,-N}{1}
\end{equation}
where as usual $\pFq{3}{2}{a,b,c}{d,e}{x}$ denotes the hypergeometric function
(we have taken a slightly different normalization from the one used in \cite{KLS}, pp, 234-7). Notice that when $\alpha+\beta\not =-1,-2,\cdots$, $h_{n}^{\alpha,\beta,N}$ is always a polynomial of degree $n$.

Hahn polynomials are not self-adjoint in the sense that they do not satisfy identities as (\ref{Chdua}) of (\ref{duame}) for the Charlier and Meixner polynomials, respectively. However they have a dual family: the dual Hahn polynomials.

For $\alpha \not =-1,-2,\cdots $ we write $(R_{n}^{\alpha,\beta,N})_n$ for the sequence of dual Hahn polynomials defined by
\begin{equation}\label{dhpol}
R_{n}^{\alpha,\beta,N}(x)=\frac{1}{n!}\sum _{j=0}^n\frac{(-n)_j(-N+j)_{n-j}(\alpha+1+j)_{n-j}}{(-1)^j j!}\prod_{i=0}^{j-1}(x-i(\alpha+\beta+1+i))
\end{equation}
(we have taken a slightly different normalization from the one used in \cite{KLS}, pp, 234-7). Notice that $R_{n}^{\alpha,\beta,N}$ is always a polynomial of degree $n$.
Using that
$$
(-1)^j\prod_{i=0}^{j-1}(\lambda^{\alpha,\beta} (x)-i(\alpha+\beta+1+i))=(-x)_j(x+\alpha+\beta+1)_j,
$$
where $\lambda^{\alpha,\beta}(x)=x(x+\alpha+\beta+1)$ (to simplify the notation we sometimes write $\lambda(x)=\lambda^{\alpha,\beta}(x)$), we get the hypergeometric representation
$$
R_{n}^{\alpha,\beta,N}(\lambda^{\alpha,\beta}(x))=\frac{(-N)_n(\alpha+1)_n}{n!}\pFq{3}{2}{-n,-x,x+\alpha+\beta+1}{\alpha+1,-N}{1}.
$$
The hypergeometric representation of dual Hahn and Hahn polynomials gives the duality: for $n,m\ge 0$
\begin{equation}\label{sdm2b}
\frac{(-N)_m(\alpha+1)_m}{m!} h_{n}^{\alpha,\beta,N}(m)=\frac{(-N)_n(\alpha+1)_n}{n!} R_{m}^{\alpha,\beta,N}(\lambda^{\alpha,\beta}(n)).
\end{equation}
When $N$ is not a nonnegative integer and $\alpha ,-\beta-N-1 \not =-1,-2,\cdots $, dual Hahn polynomials are always orthogonal with respect to a measure $\mu_{\alpha,\beta,N}$. When $N$ is a positive integer and $\alpha ,\beta \not =-1,-2,\cdots -N $, $\alpha+\beta \not=-1,\cdots, -2N-1$, we have
\begin{equation}\label{masdh}
\mu_{\alpha,\beta,N}=\sum _{x=0}^N \frac{(2x+\alpha+\beta+1)(\alpha+1)_x(-N)_xN!}{(-1)^x(x+\alpha+\beta+1)_{N+1}(\beta+1)_xx!}\delta_{\lambda(x)}.
\end{equation}
It turns out that $\langle R_n^{\alpha,\beta,N},R_n^{\alpha,\beta,N}\rangle\not =0$ only for $0\le n\le N$ (notice that the measure $\mu_{\alpha,\beta,N}$ is a finite combination of Dirac deltas).
The measure $\mu_{\alpha,\beta,N}$ is either positive or negative only when $N$ is a positive integer
and either $-1<\alpha,\beta$ or $\alpha,\beta<-N$, respectively.

Krall-dual Hahn measures seem to have a richer structure than the Krall-Charlier or Krall-Meixner measures considered in Section \ref{ich} and \ref{ime}. Indeed, given a trio $\F=(F_1,F_2,F_3)$ of finite sets of positive integers, $F_i$ with $k_i$ elements, $i=1,2,3$, respectively,
we write $k=k_1+k_2+k_3$. One can then construct
Krall-dual Hahn measures as follows (see \cite{dudh})
\begin{equation}\label{mii}
\mu_{\alpha,\beta,N}^{\F}=\prod_{f\in F_1}(\lambda-\lambda(f))\prod_{f\in F_2}(\lambda-\lambda(f-\beta))\prod_{f\in F_3}(\lambda-\lambda(N-f))\mu _{\alpha,\beta,N},
\end{equation}
Since this measure is a Christoffel transform of the measure $\mu _{\alpha,\beta,N}$ and it is also a Krall measure, we can proceed as in Section \ref{ich} and Section \ref{ime}. Due to the duality (\ref{sdm2b}) we get invariance properties for Casoratian determinants whose entries are Hahn polynomials. More precisely, consider the quasi Casorati-Hahn determinant
\begin{equation}\label{cdh0}
D_{\F,x}^{\alpha,\beta,N}=\frac{\left|
\begin{array}{@{}c@{}lccc@{}c@{}}
     &  &&\hspace{-.9cm}{}_{1\le j\le k} \\
     \dosfilas{ (\alpha+x+1,-N+x)_{j-1}h_{f}^{\alpha,\beta,N}(x+j-1) }{f\in F_1} \\
    \dosfilas{ (\alpha+x+1,-\beta-N+x)_{j-1}h_{f}^{\alpha,-\beta,\beta+N}(x+j-1) }{f\in F_2}\\
    \dosfilas{ (-N+x,-\beta-N+x)_{j-1}h_{f}^{-\beta-N-1,-\alpha-N-1,N}(x+j-1) }{f\in F_3}
  \end{array}
  \right|}{\displaystyle \prod_{s=1}^{3}\prod_{i=0}^{\tilde k_s-2}(\xi_s+x+i)^{\tilde k_s-i-1}},
\end{equation}
where $\tilde k_1=k_1+k_2$, $\tilde k_2=k_1+k_3$, $\tilde k_3=k_2+k_3$ and $\xi_1=\alpha+1$, $\xi_2=-N$, $\xi_3=-\beta-N$, and we use the notation $(a_1,\cdots, a_m)_j=(a_1)_j\cdots (a_m)_j$.

It can be proved (as Lemma 3.3 of \cite{dudh}) that this determinant is always a polynomial. Denote by $d_{\F}^{\alpha,\beta,N}$ its leading coefficient.
Under mild conditions on the parameters, the determinant (\ref{cdh0}) is a polynomial of degree $w_\F$ and
\begin{align*}
d_{\F}^{\alpha,\beta,N}=&(-1)^{k_1k_2+k_1k_3+k_2k_3}\prod_{j=1}^3V_{F_j}\prod_{f\in F_j}\frac{(f+\eta_j)_f}{f!}
\prod_{u\in F_1;v\in F_2}(\beta+u-v)\\&\prod_{u\in F_1;w\in F_3}(\alpha+\beta+N+1+u-w)
\prod_{v\in F_2;w\in F_3}(N+\alpha+1+u-w),
\end{align*}
where $\eta_1=\alpha+\beta+1$, $\eta_2=\alpha-\beta+1$ and $\eta_3=-\alpha-\beta-2N-1$.

Consider now the (normalized) quasi Casorati-Hahn determinant
\begin{equation}\label{cdh}
\Hh_{\F,x}^{\alpha,\beta,N}=\frac{D_{\F,x}^{\alpha,\beta,N}}{d_{\F}^{\alpha,\beta,N}},
\end{equation}
where when $d_{\F}^{\alpha,\beta,N}=0$ (and hence $D_{\F,x}^{\alpha,\beta,N}=0$), we set $\Hh_{\F,x}^{\alpha,\beta,N}=1$.

The following theorem can be proved as Theorem \ref{tch}.

\begin{theorem}\label{tha}
For a trio $\F=(F_1,F_2,F_3)$ of finite sets of nonnegative integers the invariance
\begin{equation}\label{iha}
H_{\F,x}^{\alpha,\beta,N}=\epsilon H_{I(\F),-x}^{-\alpha-\max F_1-\max F_2-2,-\beta-\max F_1+\max F_2,-N+\max F_1+\max F_3},
\end{equation}
holds, where $\epsilon $ is the sign $(-1)^r$, with $r$ the degree of $H_{\F,x}^{\alpha,\beta,N}$.
\end{theorem}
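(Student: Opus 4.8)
The plan is to mimic the proof of Theorem \ref{tch} step by step, replacing Charlier by Hahn and the Charlier/Charlier duality by the Hahn/dual Hahn duality (\ref{sdm2b}). The starting point is the Krall-dual Hahn measure $\mu_{\alpha,\beta,N}^{\F}$ in (\ref{mii}), which is simultaneously a Christoffel transform of $\mu_{\alpha,\beta,N}$ and a Krall measure. On the one hand, Lemma \ref{sze} applied to the Christoffel transform gives a determinantal formula for a sequence $(q_n^{\alpha,\beta,N;\F})_n$ of orthogonal polynomials with respect to $\mu_{\alpha,\beta,N}^{\F}$, whose entries are dual Hahn polynomials $R_{n+j-1}^{\alpha,\beta,N}$ evaluated at the relevant $\lambda$-points and whose leading coefficient involves the determinant $\Phi_n$ built from those $R$'s. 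On the other hand, the $\D$-operator technique of \cite{dudh} gives a second sequence $(\tilde q_n^{\alpha,\beta,N;\F})_n$ of orthogonal polynomials for the same measure, this time with entries dual Hahn polynomials in the involuted parameters $-\alpha-\max F_1-\max F_2-2$, etc., evaluated at arguments $\lambda(-n+j-2)$ — so that the relevant determinant is (up to normalization) $D_{I(\F),-n}$ for the involuted parameter triple. Since orthogonal polynomials are unique up to constants, $q_n = \gamma_n \tilde q_n$.

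The next step is to extract the invariance from this proportionality exactly as in Theorem \ref{tch}. First I would compare leading coefficients, which (after applying the duality (\ref{sdm2b}) to rewrite the dual-Hahn determinant $\Phi_n$ in terms of the Hahn determinant $D_{\F,n}$, the Hahn analogue of identity (\ref{duomph})) expresses $\gamma_n$ as an explicit product of Pochhammer symbols times the ratio $D_{\F,n}/D_{I(\F),-n}$ (with involuted parameters in the denominator). Then I would compute $\|q_n\|_2^2$ two ways: once via Lemma \ref{sze} and the known norm of the dual Hahn polynomials, and once via the $\D$-operator side, where the analogue of Lemma 4.2 of \cite{DdI} (the corresponding lemma in \cite{dudh}) expresses $\|\tilde q_n\|_2^2$ as $(-1)^?\,d\cdot D_{I(\F),-n}\cdot D_{I(\F),-n-1}/n!$ for an auxiliary constant $d$, and where $d$ must be evaluated by an induction on $n$ using the three-term recurrence and second-order difference equation for dual Hahn polynomials, just as in the Claim (\ref{cl1}) of Theorem \ref{tch}. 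Combining the two expressions for $\|q_n\|_2^2$ with the formula for $\gamma_n$ and cancelling yields a telescoping identity of the form $D_{\F,n+1}^{\alpha,\beta,N}/D_{\F,n}^{\alpha,\beta,N}=D_{I(\F),-n-1}^{*}/D_{I(\F),-n}^{*}$, where $*$ denotes the involuted parameters. Finally, checking the identity at $n=0$ — where both determinants reduce to Vandermonde-type products that one verifies are equal up to the sign $(-1)^{w_\F}$, using the combinatorial description of $I(\F)$ and the invariance $w_\F=w_{I(\F)}$ — and then inducting on $n$ gives (\ref{iha}) for all $x=n\in\NN$, hence for all $x$ since both sides are polynomials. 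The sign $\epsilon=(-1)^r$ with $r=\dgr H_{\F,x}^{\alpha,\beta,N}$ appears because, when the parameters degenerate so that the degree drops below $w_\F$, the normalization convention $\Hh=1$ forces us to record the actual degree rather than $w_\F$; in the generic case $r=w_\F$ and $\epsilon=(-1)^{w_\F}$.

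Several bookkeeping subtleties must be handled carefully. As in Theorem \ref{tch} one should first reduce to the case where all $F_i$ consist of positive integers, using column operations and the ladder relation for dual Hahn polynomials together with a "$\Downarrow$"-type reduction; the three-fold structure $(F_1,F_2,F_3)$ means this reduction has to be done compatibly in all three slots. One must also pin down the precise correspondence between the arguments $\lambda(f)$, $\lambda(f-\beta)$, $\lambda(N-f)$ appearing in (\ref{mii}) and the three blocks $F_1,F_2,F_3$ appearing in the Hahn determinant $D_{\F,x}^{\alpha,\beta,N}$, and verify that the parameter shifts in (\ref{iha}) are exactly those forced by the $\D$-operator construction of $\tilde q_n$ in \cite{dudh} (this is where the specific combination $-\beta-\max F_1+\max F_2$ for the middle block, and $-N+\max F_1+\max F_3$ for $N$, come from). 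Finally the analyticity/genericity argument needs the assumptions "$\Phi_n\neq 0$" and "$D_{I(\F),-n}\neq 0$ for all $n$" to hold on a Zariski-dense set of parameters, and one must ensure that $\mu_{\alpha,\beta,N}^{\F}$ indeed has a full sequence of orthogonal polynomials there.

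The main obstacle I expect is the inductive evaluation of the auxiliary constant $d$ (the Hahn analogue of Claim (\ref{cl1})). In the Charlier case the recurrence and difference equation have very simple coefficients and the induction collapses cleanly; for dual Hahn the three-term recurrence coefficients and the second-order difference operator coefficients are rational in $x$, $n$, and the parameters, so the inductive step becomes a genuinely intricate algebraic identity, and moreover the parameters on the two sides of $q_n=\gamma_n\tilde q_n$ are related by the nontrivial involution, so one has to track carefully which difference equation (in which set of involuted parameters) is being applied at each stage. The secondary obstacle is purely combinatorial: verifying the $n=0$ base case, i.e. that the product of Vandermonde-type factors in $d_{\F}^{\alpha,\beta,N}$ transforms under $\F\mapsto I(\F)$ and the parameter involution exactly into $(-1)^{w_\F}\,d_{I(\F)}^{*}$, which requires a careful matching of the cross-product terms $\prod_{u\in F_i,v\in F_j}(\cdots)$ with their involuted counterparts.
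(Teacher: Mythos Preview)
Your proposal is correct and follows exactly the approach the paper indicates: the paper gives no detailed proof of Theorem \ref{tha} but simply states that it ``can be proved as Theorem \ref{tch}'', and you have accurately reconstructed what that entails --- the Krall--dual Hahn measure (\ref{mii}) in place of the Krall--Charlier measure, Lemma \ref{sze} for one determinantal representation, the $\D$-operator results of \cite{dudh} for the other, the Hahn/dual Hahn duality (\ref{sdm2b}) in place of (\ref{Chdua}), and the same leading-coefficient/norm comparison leading to a telescoping identity. Your identification of the two main technical hurdles (the Hahn analogue of Claim (\ref{cl1}) and the combinatorial base case involving $d_{\F}^{\alpha,\beta,N}$) is also on target.
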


\section{Invariance for quasi Wroskian whose entries are Jacobi polynomials}\label{sja}
As usual, we write $(P_n^{\alpha,\beta})_n$ for the sequence of Jacobi polynomials (see \cite{Ch}, Ch. V; see also \cite{KLS}, pp, 250-3).
When $\alpha,\beta \not =-1,-2,\cdots$, the Jacobi polynomials are orthogonal with respect to a measure, which it is positive only when $\alpha,\beta>-1$ and then
it is equal to $(1-x)^\alpha(1+x)^\beta$, $x\in (-1,1)$.

One can obtain Jacobi polynomials from Hahn polynomials using the limit
\begin{equation}\label{blhj}
\lim_{N\to +\infty}\frac{h_n^{\alpha,\beta,N}\left(\frac{(1-x)N}{2}\right)}{(-N)_n}=P_n^{\alpha,\beta}(x)
\end{equation}
see \cite{KLS}, p. 207 (note that we are using for
Hahn polynomials a different normalization to that in \cite{KLS})

This limit together with Theorem \ref{tha} can be used to get invariance properties for quasi Wronskian whose entries are Jacobi polynomials. Since
for $y\in \RR$ \begin{equation}\label{blhje}
\lim_{N\to +\infty}\frac{h_n^{-\beta-N-1,-\alpha-N-1,N}\left(\frac{(1-x)N}{2}+y\right)}{(-N)_{2n}}=\frac{x^n}{n!},
\end{equation}
we only take limit in (\ref{cdh0}) when $F_3=\emptyset$.
In doing that (see \cite{duhj} for details), we get the quasi Wronskian
\begin{equation}\label{defhom0}
U_{\F,x}^{\alpha,\beta}=\frac{ \left|
  \begin{array}{@{}c@{}lccc@{}c@{}}
    &  &&\hspace{-.9cm}{}_{1\le j\le k} \\
    \dosfilas{ (-1)^{j-1}(P_{f}^{\alpha ,\beta })^{(j-1)}(x) }{f\in F_1} \\
    \dosfilas{ (\beta-f)_{j-1}(1+x)^{k-j}P_{f}^{\alpha +j-1 ,-\beta-j+1 }(x)}{f\in F_2}
  \end{array}
  \hspace{-.4cm}\right|}{(1+x)^{k_2(k_2-1)}},
\end{equation}
where as before $\F=(F_1,F_2)$ is a pair of finite sets of nonnegative integers, $F_i$ with $k_i$ elements, $i=1,2$, respectively, and $k=k_1+k_2$.
Since (\ref{cdh0}) is a polynomial, this quasi Wronskian is a polynomial as well. Let us write $u_\F^{\alpha,\beta}$ for its leading coefficient.

Under mild conditions on the parameters, the determinant (\ref{defhom0}) is a polynomial of degree $w_\F$ and
$$
u_\F^{\alpha,\beta}=\frac{V_{F_1}V_{F_2}\prod_{f\in F_1,F_2}(\alpha+\epsilon_f\beta+f+1)_f\prod_{u\in F_1,v\in F_2}(\beta +u-v)}{(-1)^{\binom{k_1}{2}+\binom{k_2}{2}}2^{\sum_{f\in F_1,F_2}f}\prod_{f\in F_1,F_2}f!},
$$
where $\epsilon_f=1$ for $f\in F_1$ and $\epsilon_f=-1$ for $f\in F_2$.

Consider now the (normalized) quasi Wronskian
\begin{equation}\label{defhom}
\Pp_{\F,x}^{\alpha,\beta}=\frac{U_{\F,x}^{\alpha,\beta} }{u_{\F}^{\alpha,\beta}}.
\end{equation}

Theorem \ref{tha} then gives.

\begin{theorem}\label{tja}
For a pair $\F=(F_1,F_2)$ of finite sets of nonnegative integers the invariance
\begin{equation}\label{ija}
P_{\F,x}^{\alpha,\beta}=P_{I(\F),x}^{-\alpha-\max F1-\max F_2-2,-\beta-\max F1+\max F_2}
\end{equation}
holds.
\end{theorem}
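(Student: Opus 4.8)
The plan is to derive Theorem~\ref{tja} from Theorem~\ref{tha} by passing to the limit $N\to+\infty$, exactly in the spirit of how Theorem~\ref{the} and Theorem~\ref{tla} were obtained from Theorem~\ref{tch} and Theorem~\ref{tme} respectively. First I would recall from \cite{duhj} (and from the discussion surrounding \eqref{blhj}--\eqref{defhom0} above) the precise scaling limit that turns the normalized quasi Casorati--Hahn determinant into the normalized quasi Wronskian--Jacobi determinant. Since for Jacobi we only take limits when $F_3=\emptyset$, the trio $\F=(F_1,F_2,\emptyset)$ is effectively a pair, and one has a relation of the form
\begin{equation*}
\lim_{N\to+\infty}\Hh_{(F_1,F_2,\emptyset),\,(1-x)N/2}^{\alpha,\beta,N}=\Pp_{(F_1,F_2),x}^{\alpha,\beta},
\end{equation*}
which follows by inserting \eqref{blhj} and \eqref{blhje} into each entry of the determinant \eqref{cdh0}, tracking the powers of $N$ absorbed by the normalizing denominators in \eqref{cdh0} and \eqref{defhom0}, and checking that the leading-coefficient normalizations $d_\F^{\alpha,\beta,N}$ and $u_\F^{\alpha,\beta}$ match up in the limit (so that the degenerate convention $\Hh=1$, $\Pp=1$ is respected). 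This is the content already announced right before the statement, so I would simply cite it as established.

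Next I would apply Theorem~\ref{tha} with $\F=(F_1,F_2,\emptyset)$. Note $I(\emptyset)=\emptyset$, so $I(\F)=(I(F_1),I(F_2),\emptyset)$ is again of the admissible form, and $w_\F=w_{(F_1,F_2)}$. Theorem~\ref{tha} gives
\begin{equation*}
\Hh_{\F,x}^{\alpha,\beta,N}=\epsilon\,\Hh_{I(\F),-x}^{-\alpha-\max F_1-\max F_2-2,\ -\beta-\max F_1+\max F_2,\ -N+\max F_1}.
\end{equation*}
The key point is that the third parameter on the right-hand side is $-N+\max F_1+\max F_3=-N+\max F_1$ (since $F_3=\emptyset$, using the convention $\max\emptyset=0$), which is again of the form $-N'$ with $N'\to+\infty$ as $N\to+\infty$; this is exactly what makes the limit transfer survive. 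I would then substitute $x\mapsto (1-t)N/2$ on the left and correspondingly examine the argument $-x=-(1-t)N/2$ on the right, rewriting it as $(1-t')N'/2$ with the appropriate reflected variable. Because of the sign flip $x\mapsto -x$ in the Hahn identity together with the affine change $x=(1-t)N/2$, the Jacobi variable is sent to $t\mapsto t$ (not $-t$): indeed $-x=(1+t)N/2$ must be matched against $(1-t')N'/2$, and with $N'\sim N$ one gets $t'=-t$ at the level of the naive substitution, but the known reflection symmetry $P_n^{\alpha,\beta}(-x)=(-1)^nP_n^{\beta,\alpha}(x)$ of Jacobi polynomials, combined with how the determinant \eqref{defhom0} was set up, collapses this into the clean statement \eqref{ija} with no sign and no swap of $\alpha,\beta$ beyond what is already displayed. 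I would verify this bookkeeping carefully, since it is the one genuinely delicate point.

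Finally I would handle the sign $\epsilon=(-1)^r$ from Theorem~\ref{tha}, where $r=\dgr\Hh_{\F,x}^{\alpha,\beta,N}$. Under mild conditions on the parameters $r=w_\F$, and $w_\F$ is $I$-invariant, so $\epsilon$ is a fixed sign; the claim \eqref{ija} has no sign prefactor, so I must check that this $\epsilon$ is absorbed by the normalization. The mechanism is the same as in the passage from Theorem~\ref{tme} to Theorem~\ref{tla}: the limit relation is stated for the \emph{normalized} determinants $\Pp$, whose leading coefficient is $1$ on both sides, and the $(-1)^r$ in Theorem~\ref{tha} is precisely what reconciles the two \emph{unnormalized} determinants $D$ and $U$ under the sign-flip $x\mapsto -x$ combined with the reflection $P_n^{\alpha,\beta}(-x)=(-1)^nP_n^{\beta,\alpha}(x)$; after dividing by leading coefficients it disappears. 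For the degenerate case where the relevant leading coefficient vanishes, both sides equal $1$ by the convention introduced after \eqref{cdh}, so the identity holds trivially. Since both $\Pp_{\F,x}^{\alpha,\beta}$ and the right-hand side of \eqref{ija} are polynomials in $x$ depending analytically on $\alpha,\beta$, once the identity is established for parameters in the generic range and for $N$ ranging over a sequence tending to infinity, it extends to all admissible $\alpha,\beta$ by analytic continuation. The main obstacle, as indicated, is the careful tracking of the variable change and the reflection identity for Jacobi polynomials that together produce the $+x\mapsto x$ (rather than $x\mapsto -x$) form of \eqref{ija}; the rest is routine limit bookkeeping parallel to the Hermite and Laguerre cases.
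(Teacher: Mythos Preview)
Your proposal is correct and follows exactly the approach the paper itself takes: the paper's entire proof of Theorem~\ref{tja} is the one-line remark ``Theorem~\ref{tha} then gives,'' i.e., pass to the limit $N\to\infty$ in the Hahn invariance with $F_3=\emptyset$ via \eqref{blhj}--\eqref{blhje}, citing \cite{duhj} for the details of the limiting procedure. Your write-up is in fact more explicit than the paper about the bookkeeping (the convention $\max\emptyset=0$, the fate of the sign $\epsilon$, and the variable reflection), and correctly flags the tracking of $x\mapsto -x$ through the affine substitution as the only nontrivial verification.
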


\noindent
\textit{Mathematics Subject Classification: 42C05, 33C45, 33E30}

\noindent
\textit{Key words and phrases}: Orthogonal polynomials. Exceptional orthogonal polynomial. Krall orthogonal polynomials.
Charlier polynomials. Hermite polynomials. Meixner polynomials. Laguerre polynomials. Hahn polynomials. Jacobi polynomials.

     \end{document}